\documentclass[11pt]{article}
\usepackage[T1]{fontenc}
\usepackage[utf8]{inputenc}
\usepackage{lmodern}
\usepackage{amsmath,amssymb,amsthm}
\usepackage{microtype}
\usepackage{geometry}
\usepackage{enumitem}
\newtheorem{theorem}{Theorem}
\newtheorem{proposition}{Proposition}
\newtheorem{lemma}{Lemma}
\newtheorem{corollary}{Corollary}

\newcommand{\VCdim}{\operatorname{VCdim}}

\title{Polynomial-in-$r$ Bounds for Forbidden Traces of Uniform Hypergraphs}
\author{Pei Wu}
\date{\today}

\begin{document}

\maketitle

\begin{abstract}
We give a general principle that converts fixed-uniformity bounds for
forbidden traces into bounds with polynomial dependence on the
uniformity.  More precisely, let $H$ be a fixed set system on $h\ge1$
vertices, and suppose that, for some $\alpha\ge0$,
$\operatorname{ex}_j(m,\operatorname{Tr}(H))=O_{H,j}(m^\alpha)$ for
every fixed integer $j\ge1$.  Then, for every $\varepsilon>0$, there
is a constant $C_{H,\alpha,\varepsilon}$ such that
\[
  \operatorname{ex}_r(m,\operatorname{Tr}(H))
  \le C_{H,\alpha,\varepsilon} r^{h-1-\alpha+\varepsilon}m^\alpha
\]
for all $m\ge r\ge2$.  

In particular, for trace-$C_4$-free hypergraphs and every
$\varepsilon>0$ there is a constant $C_\varepsilon$ such that
\[
  \operatorname{ex}_r(m,\operatorname{Tr}(C_4))
  \le C_\varepsilon r^{3/2+\varepsilon}m^{3/2}
\]
for all $m\ge r\ge2$.  We also construct trace-$C_4$-free $r$-graphs
showing that 
\[
\operatorname{ex}_r(m,\operatorname{Tr}(C_4))
\ge c r^{1/2}m^{3/2}
\]
for an absolute constant $c>0$, for every fixed $r\ge3$ and all
sufficiently large $m$ (depending on $r$).
\end{abstract}

\section{Introduction}

\subsection{Traces and Tur\'an Problems}

A hypergraph $\mathcal H$ is a family of subsets of a finite ground
set.  The elements of the ground set are called the vertices of
$\mathcal H$, and the members of $\mathcal H$ are called its edges.  We
write $V(\mathcal H)$ and $E(\mathcal H)$ for its vertex set and edge
set, respectively.  If every edge has size $r$, then $\mathcal H$ is
said to be $r$-uniform, or simply an $r$-graph.

One of the basic questions in extremal combinatorics is the Tur\'an
problem.  Given a family $\mathcal F$ of $r$-graphs, one asks for the
largest possible number of edges in an $m$-vertex $r$-graph containing
no member of $\mathcal F$ as a subhypergraph.  This maximum is denoted
by
\[
  \operatorname{ex}_r(m,\mathcal F).
\]
When $r=2$ and $\mathcal F=\{F\}$ consists of a single graph, we use the
customary notation $\operatorname{ex}(m,F)$.

In this paper the forbidden objects occur as traces rather than as
ordinary subhypergraphs.  For a hypergraph $\mathcal H$ and a set
$Y\subseteq V(\mathcal H)$, the restriction of $\mathcal H$ to $Y$ is
\[
  \mathcal H|_Y:=\{H\cap Y:H\in E(\mathcal H)\},
\]
where repeated edges are discarded.  Let $F$ be another hypergraph.
We say that $\mathcal H$ contains $F$ as a trace if there is a set
$Y\subseteq V(\mathcal H)$ such that $\mathcal H|_Y$ contains a
subhypergraph isomorphic to $F$.  Equivalently, after identifying
$V(F)$ with a subset $Y$ of $V(\mathcal H)$, one can assign to every
edge $e\in E(F)$ a distinct edge $H_e\in E(\mathcal H)$ satisfying
\[
  H_e\cap Y=e.
\]
This notion is also known as an induced Berge copy of $F$; see, for
example, \cite{FurediLuo,LuoSpiro}.

For a fixed set system $F$, we write
\[
  \operatorname{ex}_r(m,\operatorname{Tr}(F))
\]
for the maximum number of edges in an $m$-vertex $r$-graph that does
not contain $F$ as a trace.  When $F$ is a graph and $r=2$, this
reduces to the ordinary Tur\'an number $\operatorname{ex}(m,F)$.

Our main theorem applies to any fixed forbidden trace $F$ for which
all fixed-uniformity extremal numbers admit upper bounds with a common
exponent of $m$.
Its proof combines a VC-dimension estimate with random restrictions;
the constants from the fixed-uniformity estimates may depend
arbitrarily on their uniformity.  We illustrate the result with
$F=C_4$.  F\"uredi and Luo \cite{FurediLuo} proved that if
$F$ is an outerplanar graph on $t$ vertices, then
\[
  \operatorname{ex}_r(m,\operatorname{Tr}(F))
  \le \frac12 r^r(t-2)^{r-2}\operatorname{ex}(m,F).
\]
This result is also recorded as Theorem~1.5 in \cite{LuoSpiro}.  In
particular,
\[
  \operatorname{ex}_r(m,\operatorname{Tr}(C_4))
  \le 2^{r-3}r^r\operatorname{ex}(m,C_4)
  =O\!\left(2^r r^r m^{3/2}\right).
\]
Thus the correct order $m^{3/2}$ was known for every fixed $r$, with
an explicit upper bound whose dependence on $r$ is
$\exp(O(r\log r))$.  Our result gives dependence
$r^{3/2+\varepsilon}$ for every $\varepsilon>0$.  Conversely, by
placing products of three chains over the triangles of a
$C_4$-free graph, we show that the asymptotic coefficient of
$m^{3/2}$ grows at least as $r^{1/2}$; see
Proposition~\ref{prop:c4-lower-bound}.  Here $m$ tends to infinity
with $r$ fixed before $r$ is allowed to grow.

\section{Preliminaries}

\subsection{Notation and VC Dimension}

We use $m$ for the size of the ambient ground set, $r$ for uniformity,
$s$ for the size of a random restriction, and $L$ for a fixed cutoff
on trace sizes.  We write $[m]=\{1,\ldots,m\}$ and use $M$ for the
ground-set size in the general VC-dimension estimates.  For a set system
$\mathcal F\subseteq2^{[m]}$, its $r$th level
is
\[
  \mathcal F_r:=\mathcal F\cap\binom{[m]}r.
\]
For nonnegative integers $a$ and $d$, we write
\[
  (a)_d:=a(a-1)\cdots(a-d+1)
\]
for the falling factorial, with $(a)_0=1$.

Recall that for $S\subseteq[m]$, the restriction of $\mathcal F$ to
$S$ is
\[
  \mathcal F|_S:=\{F\cap S:F\in\mathcal F\},
\]
where repetitions are discarded.  A set $S$ is \emph{shattered} by
$\mathcal F$ if $\mathcal F|_S=2^S$.  The VC dimension of $\mathcal F$,
denoted by $\VCdim(\mathcal F)$, is the maximum size of a set shattered
by $\mathcal F$.

The following observation supplies the VC-dimension bound used below.

\begin{proposition}\label{prop:vc-forbidden-trace}
Let $H$ be a set system on an $h$-element ground set, where $h\ge1$.  If
$\mathcal F$ is trace-$H$-free, then
\[
  \VCdim(\mathcal F)\le h-1.
\]
\end{proposition}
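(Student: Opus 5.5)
We argue by contraposition: assuming $\mathcal F$ shatters some set of size $h$, we exhibit $H$ as a trace of $\mathcal F$. So suppose $S\subseteq V(\mathcal F)$ is shattered by $\mathcal F$ with $|S|=h$. Since $H$ lives on an $h$-element ground set, fix any bijection $\phi:V(H)\to S$ and identify $V(H)$ with $Y:=S$. Because $\mathcal F|_S=2^S$, for each edge $e\in E(H)$ the image $\phi(e)$ is a subset of $S$, so there is some $F_e\in\mathcal F$ with $F_e\cap S=\phi(e)$.

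Next we check that these witnesses are distinct, which the definition of trace requires. If $e\neq e'$ are distinct edges of $H$, then $\phi(e)\ne\phi(e')$ because $\phi$ is injective. Hence $F_e\cap S\neq F_{e'}\cap S$, which forces $F_e\neq F_{e'}$. Thus $e\mapsto F_e$ is an injective assignment with $F_e\cap Y=\phi(e)$. In other words, $\mathcal F|_Y$ contains a copy of $H$, so $\mathcal F$ contains $H$ as a trace. This contradicts trace-$H$-freeness, so no set of size $h$ is shattered.

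Finally, we need that no set of size larger than $h$ is shattered either. Shattering is hereditary: if $T$ is shattered and $S\subseteq T$, then $\mathcal F|_S=(\mathcal F|_T)|_S=(2^T)|_S=2^S$. So a shattered set of size at least $h$ would contain a shattered $h$-subset, which we have just excluded. Therefore $\VCdim(\mathcal F)\le h-1$.

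There is no real obstacle here. The only points needing care are the distinctness of the chosen edges $F_e$, which follows because their traces on $S$ differ, and the hereditary property of shattering. The hypothesis $h\ge1$ only ensures that the bound $h-1$ is meaningful. If $H$ has empty edge set, then every $\mathcal F$ trivially contains $H$ as a trace, so the hypothesis is vacuous in that case, and the argument above needs no modification.
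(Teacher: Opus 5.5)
Your proof is correct and is the same argument as the paper's: a shattered $h$-set has every one of its subsets as a trace, so a copy of $H$ appears. You also make explicit the distinctness of the witnesses $F_e$ and the heredity of shattering, which the paper leaves implicit. Your side remark that an edgeless $H$ is a trace of every $\mathcal F$ fails when $\mathcal F$ has fewer than $h$ vertices, but the main argument never uses it.
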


\begin{proof}
If an $h$-element set $S$ were shattered, every subset of $S$ would
occur as a trace of $\mathcal F$.  In particular, the traces would
contain a copy of $H$, contrary to the assumption.
\end{proof}

\subsection{Standard VC-Dimension Estimates}

The first tool is the Sauer--Shelah lemma, also known as the VC bound
\cite{Sauer,Shelah,VapnikChervonenkis}.

\begin{lemma}[Sauer--Shelah bound]\label{lem:sauer}
Let $\mathcal F$ be a set system on an $M$-element ground set.  If
$\VCdim(\mathcal F)\le d$, then
\[
  |\mathcal F|\le\sum_{i=0}^d\binom Mi.
\]
\end{lemma}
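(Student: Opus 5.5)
We prove the Sauer--Shelah bound (Lemma~\ref{lem:sauer}) by induction on $M+d$. The quantity to control is
\[
  \Phi_d(M):=\sum_{i=0}^d\binom Mi,
\]
which satisfies Pascal's identity $\Phi_d(M)=\Phi_d(M-1)+\Phi_{d-1}(M-1)$ for $M,d\ge1$.

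\emph{Base cases.} If $d=0$, then no singleton is shattered. So for every element $x$, either all members of $\mathcal F$ contain $x$ or none does. Hence all members of $\mathcal F$ coincide and $|\mathcal F|\le1=\Phi_0(M)$. If $M=0$, then $|\mathcal F|\le1\le\Phi_d(0)$.

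\emph{Inductive step.} Now suppose $M,d\ge1$, and fix a ground-set element $x$. Let $X$ be the ground set with $x$ removed, and define two set systems on $X$:
\[
  \mathcal F_1:=\mathcal F|_{X},\qquad
  \mathcal F_2:=\{A\subseteq X:\ A\in\mathcal F\text{ and }A\cup\{x\}\in\mathcal F\}.
\]
The map $F\mapsto F\setminus\{x\}$ sends $\mathcal F$ onto $\mathcal F_1$. Each fibre has size $1$ or $2$, and the fibres of size $2$ correspond exactly to the members of $\mathcal F_2$. Therefore
\[
  |\mathcal F|=|\mathcal F_1|+|\mathcal F_2|.
\]

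\emph{VC-dimension of the two pieces.} Any set shattered by $\mathcal F_1$ is a subset of $X$ and is also shattered by $\mathcal F$, so $\VCdim(\mathcal F_1)\le d$. The key point is that $\VCdim(\mathcal F_2)\le d-1$. Suppose $S\subseteq X$ is shattered by $\mathcal F_2$. Then for every $T\subseteq S$ there is some $A\in\mathcal F_2$ with $A\cap S=T$. Both $A$ and $A\cup\{x\}$ lie in $\mathcal F$, and these two sets realise the traces $T$ and $T\cup\{x\}$ on $S\cup\{x\}$. Thus $S\cup\{x\}$ is shattered by $\mathcal F$, which forces $|S|+1\le d$.

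\emph{Conclusion.} Applying the induction hypothesis on the $(M-1)$-element set $X$ gives
\[
  |\mathcal F|\le\Phi_d(M-1)+\Phi_{d-1}(M-1)=\Phi_d(M),
\]
by Pascal's identity.

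The only nontrivial step is the bound $\VCdim(\mathcal F_2)\le d-1$. It holds because $\mathcal F_2$ records precisely those traces on $X$ that extend in both ways at $x$.
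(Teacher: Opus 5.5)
Your proof is correct, and it is the standard induction on $M+d$. The paper itself does not prove this lemma and only cites it. Your argument, however, uses exactly the decomposition the paper uses in its appendix proof of the one-inclusion bound: your $\mathcal F|_X$ and $\mathcal F_2$ are the paper's $\mathcal U=\mathcal F_0\cup\mathcal F_1$ and $\mathcal I=\mathcal F_0\cap\mathcal F_1$. The paper likewise relies on $|\mathcal F|=|\mathcal U|+|\mathcal I|$ and $\VCdim(\mathcal I)\le d-1$.
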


In particular, Proposition~\ref{prop:vc-forbidden-trace} and
Lemma~\ref{lem:sauer} imply that every trace-$H$-free set system on
an $M$-element ground set has size $O_H(M^{h-1})$.

We also require a standard estimate for the one-inclusion graph due to
Haussler, Littlestone, and Warmuth \cite{HausslerLittlestoneWarmuth}.  The
\emph{one-inclusion graph} $G_1(\mathcal F)$ of a set system
$\mathcal F$ is the graph
whose vertex set is $\mathcal F$, in which distinct members $A$ and
$B$ are adjacent when $|A\mathbin{\triangle}B|=1$.

\begin{lemma}[One-inclusion bound]\label{lem:one-inclusion}
If $\VCdim(\mathcal F)\le d$, then the one-inclusion graph of
$\mathcal F$ has at most $d|\mathcal F|$ edges.
\end{lemma}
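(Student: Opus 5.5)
The plan is the classical shifting argument of Haussler, Littlestone and Warmuth, by induction on $|\mathcal F|$ plus the size of the ground set. The idea is to compress $\mathcal F$ to a down-closed family, and then use the fact that a down-closed family of VC dimension at most $d$ contains no set of size larger than $d$.

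\textbf{Step 1: define the shift and show it keeps the two relevant quantities under control.} For a coordinate $x$, the down-shift $T_x$ replaces each $A\in\mathcal F$ with $x\in A$ by $A\setminus\{x\}$, provided $A\setminus\{x\}\notin\mathcal F$. We need two facts.

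\emph{Claim (a):} $|T_x(\mathcal F)|=|\mathcal F|$ and $\VCdim(T_x(\mathcal F))\le\VCdim(\mathcal F)$. Suppose $S$ is shattered by $T_x\mathcal F$. If $x\notin S$, the traces on $S$ are unchanged by the shift. If $x\in S$, then for each $B\subseteq S$ with $x\in B$, the set of $T_x\mathcal F$ realizing $B$ contains $x$. So its preimage $A$ was not moved, which forces $A\setminus\{x\}\in\mathcal F$. Hence both $B$ and $B\setminus\{x\}$ are realized by $\mathcal F$, and $S$ is shattered by $\mathcal F$.

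\emph{Claim (b):} the number of edges of the one-inclusion graph does not decrease under $T_x$. Pair up vertices along each other coordinate $y$. Edges in direction $x$ are preserved exactly: a pair $\{A,A\setminus\{x\}\}$ both in $\mathcal F$ is never moved. For $y\ne x$, look at the four sets $C$, $C\cup\{x\}$, $C\cup\{y\}$, $C\cup\{x,y\}$ and check case by case that the number of $y$-edges among them present in $T_x\mathcal F$ is at least the number present in $\mathcal F$.

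\textbf{Step 2: iterate and finish.} Repeated shifts strictly decrease $\sum_{A\in\mathcal F}|A|$ unless nothing moves. So after finitely many steps we reach a family $\mathcal D$ with three properties: $T_x\mathcal D=\mathcal D$ for all $x$, $|\mathcal D|=|\mathcal F|$, $\VCdim(\mathcal D)\le d$, and $|E(G_1(\mathcal D))|\ge|E(G_1(\mathcal F))|$.

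Being fixed by every shift means $\mathcal D$ is down-closed. Every set of a down-closed family is shattered by that family, so every $A\in\mathcal D$ has $|A|\le d$.

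Now count edges of $G_1(\mathcal D)$ by their larger endpoint. Each edge is $\{A,A\setminus\{x\}\}$ with $x\in A$, and down-closedness guarantees $A\setminus\{x\}\in\mathcal D$. So each $A$ is the larger endpoint of exactly $|A|\le d$ edges, giving
\[
|E(G_1(\mathcal F))|\le|E(G_1(\mathcal D))|=\sum_{A\in\mathcal D}|A|\le d|\mathcal D|=d|\mathcal F|.
\]

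\textbf{Main obstacle.} The delicate step is Claim (b), the case analysis showing that shifting never destroys a $y$-edge without creating another. I would handle it by listing which of the four sets $C$, $C\cup\{x\}$, $C\cup\{y\}$, $C\cup\{x,y\}$ lie in $\mathcal F$ and applying the shift to each configuration. The only cases needing care are those where exactly one of the two sets in an $x$-pair is present; there one checks that the $y$-edge is merely moved down, not lost.
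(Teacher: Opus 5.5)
Your proof is correct, but it takes a different route from the paper.

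\textbf{The paper's route.} It inducts on the size of the ground set. It splits $\mathcal F$ along one coordinate $x$ into the layers $\mathcal F_0,\mathcal F_1$, and sets $\mathcal U=\mathcal F_0\cup\mathcal F_1$ and $\mathcal I=\mathcal F_0\cap\mathcal F_1$. It then uses four facts:
\begin{itemize}
\item $|\mathcal F|=|\mathcal U|+|\mathcal I|$;
\item $\VCdim(\mathcal U)\le d$;
\item $\VCdim(\mathcal I)\le d-1$;
\item the edge accounting $e(G_1(\mathcal F))\le e(G_1(\mathcal U))+e(G_1(\mathcal I))+|\mathcal I|$.
\end{itemize}
The induction closes via $d|\mathcal U|+(d-1)|\mathcal I|+|\mathcal I|=d|\mathcal F|$.

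\textbf{Your route.} You compress $\mathcal F$ by down-shifts to a down-closed family $\mathcal D$. There the bound is immediate, since each member $A$ is the larger endpoint of exactly $|A|\le d$ edges.

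Both monotonicity claims hold. The case check in (b) collapses neatly. Let $a,b\in\{0,1,2\}$ be the numbers of members in the $x$-pairs $\{C,C\cup\{x\}\}$ and $\{C\cup\{y\},C\cup\{x,y\}\}$. The two possible $y$-edges between these pairs form a matching, so before the shift at most $\min(a,b)$ of them are present. After the shift, each nonempty pair contains its bottom set, and contains its top set only when full. So exactly $\min(a,b)$ of them are present.

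\textbf{What each route buys.} The paper's argument is shorter, needs no case analysis, and mirrors the standard inductive proof of Sauer--Shelah. Yours must verify three properties of shifting:
\begin{itemize}
\item it preserves size;
\item it does not raise VC dimension;
\item it does not lose one-inclusion edges.
\end{itemize}
In return, it shows that down-closed families are extremal and gives the exact identity $e(G_1(\mathcal D))=\sum_{A\in\mathcal D}|A|$. It also yields Sauer--Shelah at no extra cost, since $\mathcal D$ consists of sets of size at most $d$.

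One small framing point: you announce an induction on $|\mathcal F|$ plus the ground-set size, but you never use one. The argument actually runs on termination via the potential $\sum_{A}|A|$, which suffices.
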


The following consequence will be the form used in the proof of the
polynomial-in-$r$ bound.

\begin{lemma}[Random restriction lemma]\label{lem:random-restriction}
Let $\mathcal F$ be a set system on an $M$-element ground set with
$\VCdim(\mathcal F)\le d$.  If $S$ is chosen uniformly at random from
the $s$-subsets of the ground set, where $0\le d\le s\le M$, then
\[
  \mathbb E\!\left[\left|\mathcal F|_S\right|\right]
  \ge |\mathcal F|\frac{(s)_d}{(M)_d}.
\]
\end{lemma}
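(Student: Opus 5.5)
The plan is to prove the bound by induction on $M-s$, reducing everything to a single-step removal statement: if $S$ is a uniformly random $(M-1)$-subset, then
\[
  \mathbb E\bigl[|\mathcal F|_S|\bigr]\ge |\mathcal F|\frac{M-d}{M}.
\]
Once this is established, we iterate it. A uniformly random $s$-subset of $[M]$ is obtained by deleting uniformly random elements one at a time. Each intermediate restriction $\mathcal F|_{S'}$ still has VC dimension at most $d$, because restriction cannot create shattered sets. So, conditioning on the current set $S'$ of size $k$, the next deletion multiplies the expected size by at least $(k-d)/k$. Taking expectations and using the tower property gives the telescoping product
\[
  \prod_{k=s+1}^{M}\frac{k-d}{k}=\frac{(s)_d}{(M)_d}.
\]
Each factor is nonnegative because $k>s\ge d$. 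A cleaner way to organize this is as a formal induction on $M-s$, applying the inductive hypothesis to $\mathcal F|_{S'}$ on the ground set $S'$.

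For the one-step claim, delete a fixed element $x$ and compare $\mathcal F$ with $\mathcal F|_{[M]\setminus\{x\}}$. The map $F\mapsto F\setminus\{x\}$ is injective on $\mathcal F$ except on pairs $\{A, A\cup\{x\}\}$ with $x\notin A$, both in $\mathcal F$. Such pairs are exactly the edges of the one-inclusion graph in direction $x$, so
\[
  |\mathcal F|_{[M]\setminus\{x\}}|=|\mathcal F|-E_x,
\]
where $E_x$ is the number of one-inclusion edges in direction $x$. Averaging over the $M$ choices of $x$ gives
\[
  \mathbb E\bigl[|\mathcal F|_S|\bigr]=|\mathcal F|-\frac1M\sum_x E_x=|\mathcal F|-\frac{|E(G_1(\mathcal F))|}{M}.
\]
By Lemma~\ref{lem:one-inclusion}, this is at least $|\mathcal F|(1-d/M)$.

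The main point needing care is the bookkeeping in the iteration rather than any inequality. We must check that the one-step bound applies with the correct current ground-set size $k$ at every stage, and that the sequential deletion process produces the uniform distribution on $s$-subsets, which holds by symmetry. The edge cases are also fine: if $d=0$ the product is $1$ and the bound is trivial, and if $s=M$ the claim is an equality.
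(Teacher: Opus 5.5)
Your proposal is correct and follows essentially the same route as the paper: deleting one element loses exactly the one-inclusion edges in that direction, averaging over the element and applying Lemma~\ref{lem:one-inclusion} gives the factor $(1-d/k)$, and iterating uniform deletions makes the product telescope to $(s)_d/(M)_d$. Your explicit check that each factor is nonnegative (because $k>s\ge d$), which is needed to chain the conditional bounds, is a detail the paper leaves implicit.
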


We prove Lemmas~\ref{lem:one-inclusion} and
\ref{lem:random-restriction} in Appendix~\ref{app:vc-proofs}.  If
$s\ge2d$, then each factor $(s-i)/s$, for $0\le i<d$, is at least
$1/2$.  Hence
\begin{equation}
  \frac{(s)_d}{(M)_d}
  \ge 2^{-d}\left(\frac{s}{M}\right)^d.
\label{eq:restriction-ratio}
\end{equation}

\section{A General Polynomial-in-$r$ Bound}

The hypothesis in the next theorem asks for one exponent $\alpha$
that works at every fixed uniformity.  The constants in those
fixed-uniformity estimates need not be controlled as $j$ grows.

\begin{theorem}\label{thm:polynomial-uniformity}
Let $H$ be a set system on $h\ge1$ vertices, and put $d=h-1$.
Let $\alpha\ge0$.  Assume that for every fixed integer $j\ge1$ there
is a constant $D_j$ such that
\begin{equation}\label{eq:fixed-uniformity-hypothesis}
  \operatorname{ex}_j(s,\operatorname{Tr}(H))
  \le D_j s^\alpha\qquad\text{for all }s\ge1.
\end{equation}
Then for every $\varepsilon>0$ there is a constant $C_{H,\alpha,\varepsilon}$
such that, for all integers $m\ge r\ge2$,
\[
  \operatorname{ex}_r(m,\operatorname{Tr}(H))
  \le C_{H,\alpha,\varepsilon}
       r^{d-\alpha+\varepsilon}m^\alpha.
\]
\end{theorem}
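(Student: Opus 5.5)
Fix a trace-$H$-free $r$-graph $\mathcal F$ on $[m]$. By Proposition~\ref{prop:vc-forbidden-trace} we have $\VCdim(\mathcal F)\le d$. The plan is to restrict $\mathcal F$ to a random $s$-set $S$, with $s$ a little below $m/r$. Lemma~\ref{lem:random-restriction} together with \eqref{eq:restriction-ratio} bounds $\mathbb E|\mathcal F|_S|$ from below by $|\mathcal F|(s/2m)^d$. We then bound $|\mathcal F|_S|$ from above level by level, using the fixed-uniformity hypothesis \eqref{eq:fixed-uniformity-hypothesis}.

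For the upper bound, first note that a restriction of a trace-$H$-free family is again trace-$H$-free: a trace of $\mathcal F|_S$ on $Y\subseteq S$ is a trace of $\mathcal F$ on $Y$, and distinct traces have distinct preimages. Hence for each $j\ge1$ the $j$th level of $\mathcal F|_S$ is a trace-$H$-free $j$-graph on $s$ vertices, so it has at most $D_js^\alpha$ members. Fix a cutoff $L$, to be chosen depending only on $d,\alpha,\varepsilon$. Traces of size at most $L$ contribute at most $1+\sum_{j\le L}D_js^\alpha=:C_Ls^\alpha$ (absorbing the empty set, since $s\ge1$). Each trace of size greater than $L$ comes from at least one edge $F$ with $|F\cap S|>L$. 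For a fixed edge,
\[
\Pr(|F\cap S|\ge L+1)\le\binom r{L+1}\frac{(s)_{L+1}}{(m)_{L+1}}\le (rs/m)^{L+1}.
\]
Taking expectations therefore gives
\[
|\mathcal F|\,2^{-d}(s/m)^d\le C_Ls^\alpha+|\mathcal F|(rs/m)^{L+1}.
\]

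Now choose $s=\lfloor m/(\lambda r)\rfloor$ with $\lambda=K r^{\delta}$, where $\delta=\varepsilon/\max(d-\alpha,1)$ and $K$ is a large constant. Then $(rs/m)^{L+1}\le\lambda^{-(L+1)}$, while $2^{-d}(s/m)^d\gtrsim (4\lambda r)^{-d}$. The error term is absorbed into half of the left side as soon as $\lambda^{L+1-d}\ge 2\cdot4^dr^d$. This holds for all $r$ once $\delta(L+1-d)\ge d$ and $K$ is large, so we fix $L=d+\lceil d/\delta\rceil$. The inequality then yields
\[
|\mathcal F|\le 2C_L(2m/s)^d s^\alpha\lesssim m^\alpha(\lambda r)^{d-\alpha}=O(r^{(1+\delta)(d-\alpha)}m^\alpha)\le O(r^{d-\alpha+\varepsilon}m^\alpha)
\]
when $\alpha\le d$.

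The remaining cases are handled by the Sauer--Shelah bound (Lemma~\ref{lem:sauer}), which gives $|\mathcal F|=O_d(m^d)$. First, if $\alpha>d$, then $m^d=m^\alpha m^{d-\alpha}\le m^\alpha r^{d-\alpha}$ since $m\ge r$, and we are done. Second, if $m/(\lambda r)<2d+1$, so that the requirements $s\ge2d$ (needed for \eqref{eq:restriction-ratio}) and $s\ge1$ fail, then $m=O(\lambda r)$. Hence $m^d=m^\alpha m^{d-\alpha}=O(m^\alpha(\lambda r)^{d-\alpha})$, which is the same bound as before.

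The main obstacle is the balancing step. The loss $(m/s)^d$ from Lemma~\ref{lem:random-restriction} must be beaten by the probability that an edge meets $S$ in more than $L$ points. This forces $s$ slightly smaller than $m/r$, by the factor $r^\delta$, and is exactly where the $\varepsilon$ in the exponent comes from. It also explains why the constants $D_j$ for $j\le L(\varepsilon)$ enter only through $C_L$, which is harmless.
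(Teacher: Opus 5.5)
Your proof is correct and follows essentially the same route as the paper: a random restriction to an $s$-set with $s\approx m/r^{1+\delta}$, the fixed-uniformity hypothesis applied to traces below the cutoff $L$, a Markov bound for larger traces, and Sauer--Shelah when $\alpha\ge d$ or $m$ is small. The only difference is cosmetic. You put a large constant $K$ into $\lambda=Kr^{\delta}$ so the large-trace error is absorbed for every $r$, whereas the paper absorbs it only for $r\ge r_0$ and handles the finitely many smaller $r$ directly by the hypothesis.
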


\begin{proof}
If $\alpha\ge d$, the Sauer--Shelah bound
(Lemma~\ref{lem:sauer}; see \cite{Sauer,Shelah,VapnikChervonenkis})
gives $\operatorname{ex}_r(m,\operatorname{Tr}(H))=O_H(m^d)$.
Since $m\ge r\ge2$,
\[
  m^d\le r^{d-\alpha+\varepsilon}m^\alpha,
\]
so the conclusion follows.  We therefore assume $0\le\alpha<d$.
Set $\delta=\varepsilon/(d-\alpha)>0$ and choose an integer $L$ with
\begin{equation}\label{eq:large-trace-cutoff}
  \delta L>d(1+\delta).
\end{equation}
Let $\mathcal G\subseteq\binom{[m]}r$ be trace-$H$-free, and write
$N=|\mathcal G|$.  Proposition~\ref{prop:vc-forbidden-trace} gives
$\VCdim(\mathcal G)\le d$.

First suppose $m>4d\,r^{1+\delta}$.  Choose uniformly at random
$S\in\binom{[m]}s$, where
\[
  s=\left\lfloor\frac{m}{r^{1+\delta}}\right\rfloor.
\]
Then $s\ge2d$ and $s/m\ge(2r^{1+\delta})^{-1}$.  If
$Z=\left|\mathcal G|_S\right|$ is the number of distinct traces, Lemma
\ref{lem:random-restriction} and
\eqref{eq:restriction-ratio} give
\begin{equation}\label{eq:general-trace-lower}
  \mathbb E Z
  \ge N\frac{(s)_d}{(m)_d}
  \ge 4^{-d}N r^{-d(1+\delta)}.
\end{equation}

Split $Z=Z_{<L}+Z_{\ge L}$ according to trace size.  Every level
of $\mathcal G|_S$ remains trace-$H$-free.  By
\eqref{eq:fixed-uniformity-hypothesis}, the levels of sizes
$1,\ldots,L-1$ contain at most $\sum_{j=1}^{L-1}D_j s^\alpha$
sets, and level zero contains at most one set.  Thus, for a constant
$K_L$ depending on the finitely many $D_j$,
\begin{equation}\label{eq:general-small-traces}
  Z_{<L}\le K_Ls^\alpha.
\end{equation}
For each $G\in\mathcal G$, if $L\le\min\{r,s\}$, then
\[
  \Pr(|G\cap S|\ge L)
  \le\mathbb E\binom{|G\cap S|}{L}
  =\binom rL\frac{(s)_L}{(m)_L}
  \le\frac1{L!}\left(\frac{rs}{m}\right)^L
  \le\frac{r^{-\delta L}}{L!}.
\]
If $L>\min\{r,s\}$, this probability is zero, so the same upper
bound holds.  Every distinct trace of size at least $L$ has such a preimage, so
\begin{equation}\label{eq:general-large-traces}
  \mathbb E Z_{\ge L}\le\frac{N}{L!}r^{-\delta L}.
\end{equation}
Combining \eqref{eq:general-trace-lower}--
\eqref{eq:general-large-traces} yields
\[
  4^{-d}N r^{-d(1+\delta)}
  \le K_Ls^\alpha+\frac{N}{L!}r^{-\delta L}.
\]
By \eqref{eq:large-trace-cutoff}, the final term can be absorbed
for all $r\ge r_0(H,\alpha,\varepsilon)$.  Since
$s\le m/r^{1+\delta}$, this gives
\begin{align*}
  N&\le 2\cdot4^dK_Lr^{d(1+\delta)}s^\alpha\\
   &\le 2\cdot4^dK_L
       r^{(d-\alpha)(1+\delta)}m^\alpha
    =2\cdot4^dK_L r^{d-\alpha+\varepsilon}m^\alpha.
\end{align*}
For the finitely many $2\le r<r_0$, the hypothesis
\eqref{eq:fixed-uniformity-hypothesis} gives the same conclusion
after enlarging the constant.

Finally, if $m\le4d\,r^{1+\delta}$, Lemma~\ref{lem:sauer} gives
\[
  N\le\sum_{i=0}^{d}\binom mi
   =O_H(m^d)
   =O_{H,\alpha,\varepsilon}
       \bigl(r^{(1+\delta)(d-\alpha)}m^\alpha\bigr).
\]
Because $(1+\delta)(d-\alpha)=d-\alpha+\varepsilon$, this is the
claimed bound in the remaining range.
\end{proof}

\subsection{The case of $C_4$}

Let $C_4$ be the cycle with edge set $\{ab,bc,cd,da\}$.
F\"uredi and Luo \cite{FurediLuo} proved that, for every fixed
$j\ge2$, there is a constant $D_j$ with
$\operatorname{ex}_j(s,\operatorname{Tr}(C_4))\le D_j s^{3/2}$
for all $s\ge1$.  The case $j=1$ follows from the trivial bound
$\operatorname{ex}_1(s,\operatorname{Tr}(C_4))\le s\le s^{3/2}$.
Taking $h=4$ and $\alpha=3/2$ in
Theorem~\ref{thm:polynomial-uniformity} gives the following.

\begin{corollary}\label{cor:c4-uniformity}
For every $\varepsilon>0$ there is a constant $C_\varepsilon$ such
that, for all integers $m\ge r\ge2$,
\[
  \operatorname{ex}_r(m,\operatorname{Tr}(C_4))
  \le C_\varepsilon r^{3/2+\varepsilon}m^{3/2}.
\]
\end{corollary}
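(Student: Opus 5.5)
The corollary is a direct specialization of Theorem~\ref{thm:polynomial-uniformity}, so the plan is simply to check that theorem's hypotheses for $H=C_4$ and then read off the exponent.

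First, view $C_4$ as a set system on $h=4$ vertices $\{a,b,c,d\}$ with edge set $\{ab,bc,cd,da\}$. Then $d=h-1=3$. Take $\alpha=3/2$; the exponent in the conclusion of Theorem~\ref{thm:polynomial-uniformity} becomes $d-\alpha+\varepsilon=3/2+\varepsilon$, which is exactly the claimed power of $r$.

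Second, verify hypothesis \eqref{eq:fixed-uniformity-hypothesis} for every fixed $j\ge1$.
\begin{itemize}
\item For $j\ge2$, invoke the result of F\"uredi and Luo \cite{FurediLuo} quoted above: $\operatorname{ex}_j(s,\operatorname{Tr}(C_4))\le D_j s^{3/2}$ for all $s\ge1$. This also follows from their outerplanar bound combined with $\operatorname{ex}(s,C_4)=O(s^{3/2})$.
\item For $j=1$, a $1$-graph on $s$ vertices has at most $s\le s^{3/2}$ edges, so $D_1=1$ works.
\end{itemize}
It matters that the theorem places no requirement on how $D_j$ grows with $j$, so the super-exponential constants $2^{j-3}j^j$ are harmless.

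With both hypotheses verified, Theorem~\ref{thm:polynomial-uniformity} gives, for each $\varepsilon>0$, a constant $C_\varepsilon=C_{C_4,3/2,\varepsilon}$ with
\[
\operatorname{ex}_r(m,\operatorname{Tr}(C_4))\le C_\varepsilon r^{3/2+\varepsilon}m^{3/2}
\]
for all $m\ge r\ge2$.

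The only step needing any care is confirming that the F\"uredi--Luo bound holds for all $s\ge1$, not just for large $s$. If it is stated only asymptotically, enlarge $D_j$ to cover the finitely many small values of $s$; for each such $s$ the extremal number is at most $\binom{s}{j}$, which is finite.
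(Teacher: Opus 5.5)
Your proposal is correct and follows the paper's own argument. You apply Theorem~\ref{thm:polynomial-uniformity} with $H=C_4$, $h=4$, $d=3$, $\alpha=3/2$, using the F\"uredi--Luo bound for $j\ge2$ and the trivial bound $\operatorname{ex}_1(s,\operatorname{Tr}(C_4))\le s\le s^{3/2}$ for $j=1$, so the exponent becomes $d-\alpha+\varepsilon=3/2+\varepsilon$; your remark about enlarging $D_j$ to cover finitely many small $s$ is a harmless extra safeguard.
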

\subsection{A Lower Bound on the Dependence on $r$}

We now construct trace-$C_4$-free hypergraphs showing that
\[
\liminf_{m\to\infty}
\frac{\operatorname{ex}_r(m,\operatorname{Tr}(C_4))}{m^{3/2}}
\]
tends to infinity as $r\to\infty$.

\begin{proposition}\label{prop:c4-lower-bound}
For every fixed integer $r\ge3$,
\[
  \liminf_{m\to\infty}
  \frac{\operatorname{ex}_r(m,\operatorname{Tr}(C_4))}{m^{3/2}}
  \ge \frac{(r-1)(r-2)}{12r^{3/2}}.
\]
\end{proposition}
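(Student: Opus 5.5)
We will build the extremal examples by blowing up a $C_4$-free graph with many triangles. In each block we place a linear order, and on each triangle we put the ``product of three chains'' consisting of unions of three initial segments.

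\textbf{Construction.} Let $G$ be the Erd\H{o}s--R\'enyi polarity graph $ER_q$ for a prime power $q$. It has $n=q^2+q+1$ vertices and is $C_4$-free. In a $C_4$-free graph every edge lies in at most one triangle, since two triangles on a common edge give a $C_4$. It is known that $ER_q$ has $(1-o(1))q^3/6=(1-o(1))n^{3/2}/6$ triangles; I would quote this count, or verify it quickly from the fact that two non-adjacent points have exactly one common neighbour.

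Replace each vertex $v$ of $G$ by a block $B_v=\{v_1,\dots,v_r\}$ of size $r$, so the ground set has $m_0=nr$ points. For each triangle $uvw$ of $G$ and each triple of positive integers $(a,b,c)$ with $a+b+c=r$, add the edge
\[
\{u_1,\dots,u_a\}\cup\{v_1,\dots,v_b\}\cup\{w_1,\dots,w_c\}.
\]
Each triangle contributes $\binom{r-1}2=(r-1)(r-2)/2$ distinct $r$-sets. Distinct triangles give distinct sets, because an edge meets exactly the three blocks of its triangle. The total number of edges is therefore
\[
(1-o(1))\frac{n^{3/2}}6\cdot\frac{(r-1)(r-2)}2=(1-o(1))\frac{(r-1)(r-2)}{12\,r^{3/2}}\,m_0^{3/2}.
\]

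\textbf{Trace-$C_4$-freeness.} Suppose $Y=\{a,b,c,d\}$ and distinct hyperedges $H_{ab},H_{bc},H_{cd},H_{da}$ satisfy $H_e\cap Y=e$. The key observation is that if $x=v_i$ and $y=v_j$ with $i<j$ lie in the same block, then every hyperedge containing $y$ also contains $x$, because all edges use initial segments of the same fixed order.

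Now $a$ and $c$ cannot lie in one block: $H_{ab}$ contains $a$ but not $c$, while $H_{bc}$ contains $c$ but not $a$, so neither can precede the other. The same argument applies to $b,d$ and to each adjacent pair; for example, $H_{da}$ separates $a$ from $b$ one way and $H_{bc}$ the other way. Hence $a,b,c,d$ lie in four distinct blocks $B_{\pi(a)},\dots,B_{\pi(d)}$.

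Next, $H_{ab}$ comes from some triangle and meets only that triangle's blocks, so $\pi(a)\pi(b)$ is an edge of $G$. The same holds for the other three pairs. Thus $\pi(a)\pi(b)\pi(c)\pi(d)$ is a $C_4$ in $G$, a contradiction. (This argument also disposes of the case where all four hyperedges come from a single triangle, since three blocks cannot host four distinct ones.)

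\textbf{Passing to all $m$.} For general large $m$, take the largest prime power $q$ with $(q^2+q+1)r\le m$ and add isolated vertices; adding isolated vertices preserves trace-$C_4$-freeness. Since consecutive primes satisfy $p_{k+1}/p_k\to1$, we get $(q^2+q+1)r=(1-o(1))m$, and the $\liminf$ bound follows. The only step needing real care is the triangle count of $ER_q$, which supplies the constant $1/6$ and hence the $12$ in the denominator. The trace argument above is the conceptual heart of the proof, but it is short once the comparability observation is made.
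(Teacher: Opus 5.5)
Your proposal is correct and follows the paper's proof: the same blow-up with ordered blocks and positive solutions of $a+b+c=r$ on each triangle, the same nested-membership argument forcing four distinct labels and hence a $C_4$ in $G$, and the same count $\binom{r-1}{2}\tau(G)$. The only difference is how you obtain $G$: you instantiate it as the polarity graph $ER_q$ (which indeed has exactly $(q^3-q)/6$ triangles) and pass to all $m$ via $p_{k+1}/p_k\to1$, whereas the paper cites Alon--Shikhelman for a $C_4$-free graph on $N=\lfloor m/r\rfloor$ vertices with $(\frac16+o(1))N^{3/2}$ triangles.
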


\begin{proof}
We use the following facts:
\begin{enumerate}
  \item There is a $C_4$-free graph $G$ on $N$ vertices with
  $\left(\frac16+o(1)\right)N^{3/2}$ triangles as $N\to\infty$
  \cite[Theorem~4.9, with $t=2$]{AlonShikhelman}.
  \item On three disjoint ordered sets of size $r$, the family
  \[
    \bigl\{\{u_1,\ldots,u_a,v_1,\ldots,v_b,w_1,\ldots,w_c\}:
    1\le a,b,c\le r\bigr\}
  \]
  is trace-$C_4$-free.  Indeed, among any four vertices, two belong
  to the same ordered set, and their nested membership prevents a
  trace $C_4$.  This family is the product of three chains.
\end{enumerate}
Put $N=\lfloor m/r\rfloor$ and label $rN$ of the ambient vertices
by $(v,t)$, where $v\in[N]$ and $t\in[r]$.  The remaining vertices
belong to no hyperedge.  Choose $G$ as above with vertex set $[N]$,
and write $v_t=(v,t)$.
For each triangle $\{u,v,w\}$ of $G$, with $u<v<w$, include in
$\mathcal H$ all hyperedges of the form
$\{u_1,\dots,u_a,v_1,\dots,v_b,w_1,\dots,w_c\}$,
where $a,b,c\ge1$ and $a+b+c=r$.

To see that the construction is trace-$C_4$-free, suppose that four
vertices support a trace $C_4$.  No two of them can have the same
label $v$: if $i<j$, every hyperedge containing $(v,j)$ also contains
$(v,i)$, whereas a $C_4$ has an edge containing $(v,j)$ but not $(v,i)$.
Thus the four vertices have distinct labels, say $x_1,x_2,x_3,x_4$
in cyclic order.  Each consecutive pair of labels belongs to the
triangle defining a hyperedge that witnesses the corresponding
trace edge, so it is an edge of $G$.  These four pairs form a $C_4$
in $G$, a contradiction.

Finally, each triangle contributes exactly $\binom{r-1}{2}$
hyperedges, one for each positive integer solution of $a+b+c=r$.
Each hyperedge uniquely determines its supporting triangle and its
three prefix lengths, so no hyperedge is counted twice.  Writing $\tau(G)$
for the number of triangles in $G$, we obtain, for fixed $r$ as
$m\to\infty$,
\begin{align*}
  |\mathcal H|
  &=\binom{r-1}{2}\tau(G)\\
  &=\binom{r-1}{2}\left(\frac16+o(1)\right)
    \left\lfloor\frac mr\right\rfloor^{3/2}\\
  &=\left(\frac{(r-1)(r-2)}{12r^{3/2}}+o_r(1)\right)m^{3/2}.
\end{align*}
This proves the claimed lower bound.  Its coefficient is asymptotic
to $\sqrt r/12$ as $r\to\infty$, and hence is unbounded.
\end{proof}

\section{Concluding Remarks}

The restriction argument applies whenever the fixed-uniformity trace
bounds share an exponent $\alpha<h-1$; the case $\alpha\ge h-1$
follows directly from the Sauer--Shelah bound.  No control on the
dependence of the fixed-uniformity constants on the uniformity is
required.  In the trace-$C_4$ case, the argument
replaces the previously available $\exp(O(r\log r))$ coefficient
by $r^{3/2+\varepsilon}$ for all $m\ge r$.

Proposition~\ref{prop:c4-lower-bound} shows that a factor of order
at least $r^{1/2}$ is necessary even in the fixed-uniformity
asymptotic regime.  More precisely, if
\[
  a_r:=\limsup_{m\to\infty}
    \frac{\operatorname{ex}_r(m,\operatorname{Tr}(C_4))}{m^{3/2}},
\]
then for every $\varepsilon>0$ and all $r\ge3$,
\[
  \frac{(r-1)(r-2)}{12r^{3/2}}
  \le a_r\le C_\varepsilon r^{3/2+\varepsilon}.
\]
Determining the order of growth of $a_r$ remains open.

\section{AI-Assistance Disclosure}

The author used GPT-5.6 Sol to complete the proof and to assist with the exposition. The author independently reviewed and revised the outputs, verified the mathematical claims, and takes full responsibility for the final manuscript.
\appendix

\section{Proofs of the VC-Dimension Estimates}
\label{app:vc-proofs}

We prove the two VC-dimension estimates stated in the
Preliminaries.  The proof of the one-inclusion bound is included to keep
track of the constant needed in the restriction argument.

\begin{proof}[Proof of Lemma~\ref{lem:one-inclusion}]
We induct on the size $M$ of the ground set.  The assertion is
immediate when $M=0$ or $d=0$.  Fix a ground-set element $x$ and write
\[
  \mathcal F_0:=\{A\in\mathcal F:x\notin A\},
  \qquad
  \mathcal F_1:=\{A\setminus\{x\}:A\in\mathcal F,\ x\in A\}.
\]
Both families are regarded as set systems on the remaining $M-1$
elements.  Put
\[
  \mathcal U:=\mathcal F_0\cup\mathcal F_1,
  \qquad
  \mathcal I:=\mathcal F_0\cap\mathcal F_1.
\]
Then $|\mathcal F|=|\mathcal U|+|\mathcal I|$.  Restriction cannot
increase VC dimension, so $\VCdim(\mathcal U)\le d$.  Moreover,
$\VCdim(\mathcal I)\le d-1$: if $\mathcal I$ shattered a set $Y$ of
size $d$, then $\mathcal F$ would shatter $Y\cup\{x\}$.

There are exactly $|\mathcal I|$ one-inclusion edges in the $x$
direction.  For any other coordinate, an edge may occur in one or in
both of the two layers $\mathcal F_0$ and $\mathcal F_1$.  Its first
occurrence is represented by an edge of the one-inclusion graph of
$\mathcal U$; if it occurs in both layers, it is also an edge of the
one-inclusion graph of $\mathcal I$.  Consequently,
\[
  e(G_1(\mathcal F))
  \le e(G_1(\mathcal U))+e(G_1(\mathcal I))+|\mathcal I|.
\]
Applying the induction hypothesis to $\mathcal U$ and $\mathcal I$
gives
\[
  e(G_1(\mathcal F))
  \le d|\mathcal U|+(d-1)|\mathcal I|+|\mathcal I|
  =d|\mathcal F|,
\]
as required.
\end{proof}

\begin{proof}[Proof of Lemma~\ref{lem:random-restriction}]
Suppose first that a family $\mathcal G$ of VC dimension at most $d$
is supported on a $q$-element ground set $Q$, where $q\ge1$.
If a uniformly random ground-set element is
deleted, the number of distinct members of $\mathcal G$ that are lost
is the number of one-inclusion edges in the direction of that element.
Every one-inclusion edge has exactly one direction.  Lemma
\ref{lem:one-inclusion} therefore gives
\[
  \mathbb E\!\left[\left|\mathcal G|_{Q\setminus\{x\}}\right|\right]
  =|\mathcal G|-\frac{e(G_1(\mathcal G))}{q}
  \ge \left(1-\frac dq\right)|\mathcal G|,
\]
where $x$ is uniform in $Q$.

Starting with $\mathcal F$, delete elements successively and uniformly
until $s$ remain.  At every stage the restricted family still has VC
dimension at most $d$, and the final set is a uniformly random
$s$-subset.  Iterating the preceding inequality yields
\[
  \mathbb E\!\left[\left|\mathcal F|_S\right|\right]
  \ge |\mathcal F|\prod_{q=s+1}^{M}\left(1-\frac dq\right)
  =|\mathcal F|\frac{(s)_d}{(M)_d}.
\]
\end{proof}

\end{document}